\documentclass[10pt]{amsart}
\usepackage{amsmath,amssymb,amsthm}
\usepackage[margin=1.1in]{geometry}
\usepackage{microtype}
\usepackage{xcolor}
\usepackage[colorlinks=true,linkcolor=blue!50!black,citecolor=blue!50!black,urlcolor=blue!50!black]{hyperref}

\newtheorem{theorem}{Theorem}

\newtheorem{proposition}[theorem]{Proposition}

\newcommand{\C}{\mathbb C}

\title[Nineteen vectors for phase retrieval in $\C^6$]{Nineteen vectors and their phase-retrieval injectivity in $\C^6$}

\author[Minwook Kim]{Minwook Kim\\[3pt]
{\small KAIST, Daejeon, South Korea}}
\address{KAIST (Korea Advanced Institute of Science and Technology), Daejeon, South Korea}
\email{brian10kim22@kaist.ac.kr}

\subjclass[2020]{42C15, 94A12}

\date{August 2026}

\begin{document}

\begin{abstract}
We exhibit nineteen explicit vectors in $\C^6$, with Gaussian-integer
entries, that do phase retrieval: the intensities
$|\langle a_j,x\rangle|^2$, $j=1,\dots,19$, determine every $x\in\C^6$
up to a unimodular scalar. This gives $m_{\C}(6)\le19=4d-5$, one below
the generic count $4d-4$; combined with the lower bound
$m_{\C}(6)\ge18$ of Wang and Xu, the minimal measurement number in
dimension six is $18$ or $19$. The claim is verified by an exact
rational computation, archived and rerunnable.
\end{abstract}

\maketitle

\section{Statement}

Fix measurement vectors $a_1,\dots,a_m\in\C^d$ and let
\[
\Phi_A(x)=\bigl(|\langle a_1,x\rangle|^2,\dots,|\langle
a_m,x\rangle|^2\bigr),\qquad
\langle a,x\rangle=\textstyle\sum_k\bar a_kx_k .
\]
The family \emph{does phase retrieval} if $\Phi_A(x)=\Phi_A(y)$ forces
$y=\omega x$ with $|\omega|=1$; $m_\C(d)$ denotes the minimal $m$ for
which such a family exists in $\C^d$. Generic families of $4d-4$
vectors do phase retrieval \cite{CEHV}, and Vinzant \cite{Vinzant}
exhibited $11=4\cdot4-5$ vectors doing phase retrieval in $\C^4$. In
dimension six, Wang and Xu proved $m_\C(6)\ge18$
\cite[Theorem~5.2]{WangXu}, while the best upper bound known has been
the generic $4d-4=20$; whether $19=4d-5$ vectors can do phase
retrieval in $\C^6$ has been open. We answer affirmatively.

\begin{theorem}\label{thm:main}
The nineteen row vectors of Table~\textup{\ref{tab:frame}} do phase
retrieval in $\C^6$. Consequently $18\le m_\C(6)\le19$.
\end{theorem}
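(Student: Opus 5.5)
The plan is to reduce phase retrieval to a statement about low-rank Hermitian matrices and then certify that statement by exact computation. By the standard lifting (as in \cite{CEHV} and \cite{Vinzant}), $|\langle a_j,x\rangle|^2=\langle a_ja_j^*,xx^*\rangle$. Moreover, $\Phi_A(x)=\Phi_A(y)$ holds exactly when $Q=xx^*-yy^*$ lies in the kernel of the linear map $\mathcal A(Q)=(a_j^*Qa_j)_{j=1}^{19}$. Every nonzero matrix of the form $xx^*-yy^*$ is Hermitian of rank at most $2$, so the family does phase retrieval if and only if
\[
\ker\mathcal A\cap\{Q=Q^*:\ \operatorname{rank}Q\le2\}=\{0\}.
\]
Here $\mathcal A$ is a real-linear map from the $36$-dimensional real space of $6\times6$ Hermitian matrices to $\mathbb R^{19}$. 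The first step is to compute $\ker\mathcal A$ exactly over $\mathbb Q(i)$. Since the entries are Gaussian integers, this is a rational linear algebra problem, and I expect the kernel to have real dimension $36-19=17$, which I will verify by an exact rank computation.

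The second step is to show that no nonzero $Q=\sum_{k=1}^{17}t_kB_k$ in the kernel (with $B_k$ an exact rational basis) has rank $\le2$. The rank condition is that all $3\times3$ minors of $Q$ vanish. This gives a homogeneous system of cubics in $t\in\mathbb R^{17}$, and we need it to have only the trivial real solution. The variety of rank-$\le2$ Hermitian matrices has real dimension $2\cdot 2\cdot 6-4=20$ inside $36$, i.e.\ codimension $16$ (this is exactly why the count $4d-4$ appears). A $17$-dimensional linear space therefore meets it, in general, in a real cone of dimension $1$, so a generic choice would fail. The chosen vectors must be special, in the spirit of Vinzant's $\C^4$ example. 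Complex solutions are allowed and will typically exist; the point is that none of them are real.

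To make this checkable, I would follow Vinzant's strategy. First reduce by symmetry: use a Gaussian-integer change of coordinates so that the kernel basis is sparse. Then compute exactly the zero set of the minors restricted to the kernel, complexified. Concretely, I would compute a Gröbner basis (or a rational univariate representation) of the ideal generated by the $3\times3$ minors, after dehomogenizing by $t_k=1$ for each $k$ in turn. This should show the complex projective solution set is finite. I would then isolate its points with exact real-root counting, via Sturm sequences applied to the univariate eliminant, and check that none is real, i.e.\ compatible with $Q$ Hermitian. Since the cubic system lives over $\mathbb Q$ when we split into real and imaginary parts, every step is exact rational arithmetic and can be archived and rerun.

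The main obstacle is the size of this elimination: seventeen variables and many cubic minors are at the edge of exact Gröbner feasibility. If direct elimination stalls, I would first exploit the rank-$2$ parametrization instead of the minors. I would write $Q=uu^*-vv^*$ or $Q=uu^*+vv^*$ (up to sign), impose $\mathcal A(Q)=0$, and quotient by the $U(1,1)$ or $U(2)$ gauge, which fixes coordinates such as $u_1\in\mathbb R$ and $v_1=0$. Then I would split into cases according to which coordinates vanish, eliminating a few variables at a time in each case. The final deduction is immediate. The certified emptiness gives phase retrieval with $19$ vectors, hence $m_\C(6)\le19$, and together with \cite[Theorem~5.2]{WangXu} this yields $18\le m_\C(6)\le19$.
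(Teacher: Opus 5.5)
Your reduction is sound, and a clean exact certificate would prove the theorem. If the $17$-dimensional real kernel of $Q\mapsto(a_j^*Qa_j)_j$ contains no nonzero Hermitian $Q$ of rank $\le2$, phase retrieval follows. Only this direction is needed; the converse you state also needs that the $a_j$ span $\C^6$, to rule out semidefinite rank-two $Q$. But your route is genuinely different and ignores the structure of the frame, and that structure is what makes the paper's certificate small.

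Every row is a multiple of $a(w)=(1,\bar w,\dots,\bar w^5)$, so a measurement is an evaluation $f_x(w)$ of a polynomial of degree $\le5$. The paper uses this as follows:
\begin{itemize}
\item The eleven real nodes force $ff^{\#}=gg^{\#}$, hence $f=hu$ and $g=\omega h u^{\#}$.
\item At the eight nodes $\zeta s$, the identity $|u(\zeta s)|^2-|u^{\#}(\zeta s)|^2=4\mathcal W(U,V)(s)$ plus a degree count forces $h$ to be constant.
\item What remains is a linear section of $\mathrm{Gr}(2,6)$ in Pl\"ucker coordinates: five quadrics in five unknowns, with a squarefree eliminant of degree $14$ and no real roots.
\end{itemize}

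In your formulation, the complexified solution set is a linear section of the $6\times6$ rank-$\le2$ determinantal variety, with B\'ezout count $1764$. View $Q$ as the bivariate polynomial $Q(z,z')$ whose value at $(w,\bar w)$ is the measurement at node $w$. Over $\C$, the same factorization shows that a rank-two kernel element has the form $h(z)k(z')\bigl(p_1(z)p_2(z')-p_2(z)p_1(z')\bigr)$, with $h$ and $k$ independent. Sorting by $\deg h=\deg k=0,\dots,4$, the count splits as $14+280+840+560+70$.

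The $1750$ points with $\deg h\ge1$ have $h$ and $k$ vanishing at different rotated nodes. This cannot happen for Hermitian $Q$, where $k$ is a multiple of $h^{\#}$, and the paper discards all of them with a short degree argument.

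Your Gr\"obner computation has to carry all of them along: $400$ cubic minors in $16$ affine variables with large coefficients, over $17$ charts. Its Sturm step also needs a generic linear form, because a coordinate eliminant can pick up real roots from non-real points and then decides nothing. You neither run this computation nor reduce its degree, and the fallback (gauge fixing plus case splits) is too vague to count. So feasibility is the real risk of your plan. Its advantage is generality: it would test any candidate frame, whereas the paper's reduction is tailored to this node configuration and reruns in minutes.

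Your side remark that a generic choice of $4d-5$ vectors would fail is wrong, though nothing depends on it. Phase retrieval is an open condition, so the paper's frame, like Vinzant's in $\C^4$, has a whole neighbourhood of successful frames. Dimension counting only predicts a finite complex intersection, whose real points may or may not exist.
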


\begin{table}[h]
{\scriptsize
\begin{verbatim}
A6 = [
 [1, -5, 25, -125, 625, -3125],
 [1, -4, 16, -64, 256, -1024],
 [1, -3, 9, -27, 81, -243],
 [1, -2, 4, -8, 16, -32],
 [1, -1, 1, -1, 1, -1],
 [1, 0, 0, 0, 0, 0],
 [1, 1, 1, 1, 1, 1],
 [1, 2, 4, 8, 16, 32],
 [1, 3, 9, 27, 81, 243],
 [1, 4, 16, 64, 256, 1024],
 [1, 5, 25, 125, 625, 3125],
 [9765625, 13671875+46875000j, -205859375+131250000j,
  -918203125-804375000j, 2575515625-5533500000j, 30166521875+4615575000j],
 [9765625, 10937500+37500000j, -131750000+84000000j,
  -470120000-411840000j, 1054931200-2266521600j, 9884965888+1512431616j],
 [9765625, 8203125+28125000j, -74109375+47250000j,
  -198331875-173745000j, 333786825-717141600j, 2345748741+358907112j],
 [9765625, 5468750+18750000j, -32937500+21000000j,
  -58765000-51480000j, 65933200-141657600j, 308905184+47263488j],
 [9765625, 2734375+9375000j, -8234375+5250000j,
  -7345625-6435000j, 4120825-8853600j, 9653287+1476984j],
 [9765625, -2734375-9375000j, -8234375+5250000j,
  7345625+6435000j, 4120825-8853600j, -9653287-1476984j],
 [9765625, -8203125-28125000j, -74109375+47250000j,
  198331875+173745000j, 333786825-717141600j, -2345748741-358907112j],
 [9765625, -10937500-37500000j, -131750000+84000000j,
  470120000+411840000j, 1054931200-2266521600j, -9884965888-1512431616j],
]
\end{verbatim}}
\caption{The nineteen measurement vectors $a_1,\dots,a_{19}$ for
$\C^6$ (rows; $\mathtt j=\sqrt{-1}$).}\label{tab:frame}
\end{table}

\section{Verification}\label{sec:verification}

The verification is a finite, exact computation; this section records
the elementary reduction that produces the computed system, and the
archived facts about that system.

\subsection*{Reduction to polynomials}
Every row of Table~\ref{tab:frame} is a scalar multiple of a vector
$a(w):=(1,\bar w,\bar w^2,\dots,\bar w^5)$: rows $1$--$11$ are $a(r)$
with $r=-5,\dots,5$, and rows $12$--$19$ are $25^5\,a(\zeta s)$ with
\[
\zeta=\tfrac{-7+24i}{25},\qquad s\in\{-5,-4,-3,-2,-1,1,3,4\},
\]
as one checks by expanding the powers. Scaling a measurement vector by
a nonzero constant scales the corresponding intensity by a positive
constant, so it suffices to verify phase retrieval for the unscaled
family $\{a(r)\}\cup\{a(\zeta s)\}$. For $x\in\C^6$ let
$f_x(z)=\sum_{k=0}^{5}x_{k+1}z^k$, so that $\langle
a(w),x\rangle=f_x(w)$; the map $x\mapsto f_x$ is a linear bijection
onto the polynomials of degree $\le5$. For $f(z)=\sum c_kz^k$ write
$f^{\#}(z)=\sum\bar c_kz^k$; then $f^{\#}(r)=\overline{f(r)}$ for real
$r$, $|f(z)|^2=f(z)f^{\#}(\bar z)$ in general, and $(f^{\#})^{\#}=f$.

For real polynomials $U,V$ put
\[
\mathcal W(U,V)(t)
=\tfrac1{2i}\Bigl(U(\zeta t)V(\bar\zeta t)-U(\bar\zeta
t)V(\zeta t)\Bigr).
\]
Its values at real $t$ are real, so its coefficients are real; if
$\deg U,\deg V\le k$, the coefficient of $t^m$ collects the terms
$u_iv_j-u_jv_i$ with $i<j$, $i+j=m$, so $\deg\mathcal W(U,V)\le2k-1$
and $t\mid\mathcal W(U,V)$; the coefficient of $t^1$ is
$\sin\theta\,(u_1v_0-u_0v_1)$ with $\theta=\arg\zeta$,
$\sin\theta=\tfrac{24}{25}$. A direct expansion with $u=U+iV$,
$u^{\#}=U-iV$ gives, for real $s$,
\begin{equation}\label{eq:fourdelta}
|u(\zeta s)|^2-|u^{\#}(\zeta s)|^2
=u(\zeta s)u^{\#}(\bar\zeta s)-u(\bar\zeta s)u^{\#}(\zeta s)
=4\,\mathcal W(U,V)(s).
\end{equation}

Two elementary facts. \emph{First}: if $f,g\neq0$ have degree $\le5$
and $|f|=|g|$ at the eleven points $r=-5,\dots,5$, then
$ff^{\#}-gg^{\#}$, of degree $\le10$, vanishes at eleven points, so
$ff^{\#}=gg^{\#}$; writing $h=\gcd(f,g)$, $f=hu$, $g=hv$ with
$\gcd(u,v)=1$ and cancelling $hh^{\#}$ gives $uu^{\#}=vv^{\#}$, whence
$v$ divides $u^{\#}$ and $u$ divides $v^{\#}$, so by degrees
$v=\lambda u^{\#}$ with $|\lambda|=1$: every such pair has the form
\begin{equation}\label{eq:normalform}
f=hu,\qquad g=\omega\,h\,u^{\#},\qquad|\omega|=1,\qquad \deg h+\deg
u\le5 .
\end{equation}
\emph{Second}: if real $U,V$ satisfy $\mathcal W(U,V)\equiv0$, they
are linearly dependent. Indeed $\zeta^2$ is not a root of unity ---
otherwise $\zeta$ would be one, making
$\zeta+\zeta^{-1}=2\operatorname{Re}\zeta=-\tfrac{14}{25}$ a rational
algebraic integer, hence an integer --- and for coprime $U,V$ the
identity $U(\zeta t)V(\bar\zeta t)=U(\bar\zeta t)V(\zeta t)$ forces
$U(\zeta t)\mid U(\bar\zeta t)$ (a common root $\tau$ of $U(\zeta t),
V(\zeta t)$ would make $\zeta\tau$ a common root of $U,V$), so
$U(\bar\zeta t)=\lambda U(\zeta t)$ and the root multiset of $U$ is
invariant under multiplication by $\zeta^2$, which has infinite order:
$U=u\,t^p$, symmetrically $V=v\,t^q$, $\min(p,q)=0$ by coprimality,
and substituting back forces $\zeta^{2(p-q)}=1$, so $p=q=0$ and both
are constants.

\begin{proposition}\label{prop:reduction}
The unscaled family fails phase retrieval if and only if there exist
real linearly independent $U,V$ of degree $\le5$ and a real
$\gamma\neq0$ with
\begin{equation}\label{eq:standard}
\mathcal W(U,V)(t)\;=\;\gamma\;t\,(t+5)(t+4)(t+3)(t+2)(t+1)(t-1)(t-3)(t-4).
\end{equation}
Moreover every such pair satisfies $u_1v_0-u_0v_1\neq0$.
\end{proposition}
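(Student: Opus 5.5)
The plan is to prove the two directions separately, using the normal form \eqref{eq:normalform} together with a root count for $\mathcal W(U,V)$, and then to read off the final claim from the coefficient of $t$.

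\emph{Reverse direction.} Suppose real, linearly independent $U,V$ of degree $\le5$ satisfy \eqref{eq:standard} with $\gamma\ne0$. Put $u=U+iV$, $f=u$ and $g=u^{\#}=U-iV$.
\begin{itemize}
\item At the eleven real points $r$ we have $|g(r)|=|\overline{u(r)}|=|f(r)|$.
\item At the points $\zeta s$, identity \eqref{eq:fourdelta} gives $|f(\zeta s)|^2-|g(\zeta s)|^2=4\,\mathcal W(U,V)(s)=0$, because each of the eight $s$ is a root of the right side of \eqref{eq:standard}.
\end{itemize}
So $\Phi(f)=\Phi(g)$. Suppose $g=\omega f$ with $|\omega|=1$. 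Then $U-iV=\omega(U+iV)$. Comparing real and imaginary parts of the coefficients forces $U,V$ to be linearly dependent, which is a contradiction. Hence the family fails phase retrieval.

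\emph{Forward direction.} Suppose $f,g\neq0$ have equal intensities but $g\notin\{\omega f:|\omega|=1\}$. By the first elementary fact, $f=hu$ and $g=\omega h u^{\#}$ with $\deg h+\deg u\le5$. Write $u=U+iV$ with $U,V$ real and $k=\deg u$.
\begin{itemize}
\item If $U,V$ were dependent, then $u=c\,P$ with $P$ real and $c\in\C$. This gives $u^{\#}=\bar c P$, so $g$ would be a unimodular multiple of $f$. Hence $U,V$ are independent.
\item By the second elementary fact, $\mathcal W:=\mathcal W(U,V)\not\equiv0$.
\item Equality of intensities at $\zeta s$ reads $|h(\zeta s)|^2\bigl(|u(\zeta s)|^2-|u^{\#}(\zeta s)|^2\bigr)=0$. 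By \eqref{eq:fourdelta}, for each of the eight values of $s$ either $h(\zeta s)=0$ or $\mathcal W(s)=0$.
\item The points $\zeta s$ are distinct, so at most $\deg h\le 5-k$ of the $s$ are roots of $h$. Therefore $\mathcal W$ vanishes at $t=0$ (recall $t\mid\mathcal W$) and at at least $8-(5-k)=3+k$ nonzero points, i.e.\ at at least $4+k$ points.
\item Since $\deg\mathcal W\le 2k-1$ and $\mathcal W\not\equiv0$, we need $4+k\le 2k-1$, i.e.\ $k\ge5$.
\end{itemize}
Thus $k=5$ and $h$ is constant, so $\mathcal W$ vanishes at all nine points $0,-5,-4,-3,-2,-1,1,3,4$. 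Since $\deg\mathcal W\le9$ and $\mathcal W$ has real coefficients, $\mathcal W=\gamma\,t\prod_s(t-s)$ for a real $\gamma$, and $\gamma\ne0$ because $\mathcal W\not\equiv0$. This is \eqref{eq:standard}.

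\emph{The last claim.} Compare coefficients of $t^1$ in \eqref{eq:standard}. On the left it is $\tfrac{24}{25}(u_1v_0-u_0v_1)$. On the right it is $\gamma\prod_s(-s)$, which is nonzero since $\gamma\ne0$ and no $s$ vanishes. Hence $u_1v_0-u_0v_1\neq0$.

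\emph{Main obstacle.} The step I expect to need the most care is the forward direction's handling of the common factor $h$. The root count is what rules out a nonconstant $h$: each unit of $\deg h$ excuses at most one vanishing condition at a point $\zeta s$, but it costs two units of available degree for $\mathcal W$. One must check that this trade-off is exactly balanced against the nine forced zeros, and that the distinctness of the points $\zeta s$ and the independence of $U,V$ are used correctly.
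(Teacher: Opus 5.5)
Your proof is correct and follows the paper's argument essentially step for step: the reverse direction uses the same pair $f=u$, $g=u^{\#}$; the forward direction uses the same normal form and root count (your bound $4+k\le 2k-1$ is the paper's $k+3\le 2k-2$ after factoring out $t$) to force $k=5$ and constant $h$; and the last clause uses the same $t^1$-coefficient comparison. The only missing piece is the one-line justification of $f,g\neq0$, which the normal form requires: if $f=0$, then $g$ vanishes at the eleven real points and so $g=0$ too, contradicting the assumed failure.
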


\begin{proof}
($\Leftarrow$) Set $u=U+iV$ and let $x,y$ be the coefficient vectors
of $u,u^{\#}$. Then $|u(r)|=|u^{\#}(r)|$ for all real $r$, and at the
eight rotated points \eqref{eq:fourdelta} and \eqref{eq:standard} give
$|u(\zeta s)|=|u^{\#}(\zeta s)|$; so all nineteen measurements agree.
If $u^{\#}=\omega u$ with $|\omega|=1$, then $(1-\omega)U=i(1+\omega)V$
would make $U,V$ dependent; hence $y$ is not a unimodular multiple of
$x$, and phase retrieval fails.

($\Rightarrow$) Let $f=f_x$, $g=f_y$ witness a failure. If $f=0$, then
$g$ vanishes at eleven $>5$ real points, so $g=0=f$, a contradiction;
so $f,g\neq0$ and \eqref{eq:normalform} applies, with
$u\not\parallel u^{\#}$ (else $g$ is a unimodular multiple of $f$),
i.e.\ $U,V$ independent for $u=U+iV$. Let $k=\deg u$, so $\deg
h\le5-k$. At each rotated point with $h(\zeta s)\neq0$, dividing
$|f(\zeta s)|=|g(\zeta s)|$ by $|h(\zeta s)|$ and using
\eqref{eq:fourdelta} gives $\mathcal W(U,V)(s)=0$; since $h$ has at
most $5-k$ roots, at least $8-(5-k)=k+3$ of the eight values $s$ are
roots of $\mathcal W:=\mathcal W(U,V)$. Write $\mathcal
W=t\widetilde{\mathcal W}$, $\deg\widetilde{\mathcal W}\le2k-2$. If
$k\le4$, then $k+3>2k-2$: $\widetilde{\mathcal W}$ has more distinct
nonzero roots than its degree, so $\mathcal W\equiv0$ and the second
fact above makes $U,V$ dependent --- a contradiction. Hence $k=5$, $h$
is constant, all eight equations survive, and $\widetilde{\mathcal
W}$, of degree $\le8$, has the eight listed roots:
$\widetilde{\mathcal W}=\gamma\prod(t-s)$ with $\gamma\neq0$ (else
$\mathcal W\equiv0$ again), which is \eqref{eq:standard}. Comparing
coefficients of $t^1$ in \eqref{eq:standard} gives
$\tfrac{24}{25}(u_1v_0-u_0v_1)=\gamma\prod_s(-s)\neq0$.
\end{proof}

\subsection*{The computation}
Equation \eqref{eq:standard} is linear in the Pl\"ucker coordinates
$q_{ij}=u_jv_i-u_iv_j$ of the plane spanned by $U,V$. In the chart
$q_{01}=u_1v_0-u_0v_1=1$, legitimate by the last clause of
Proposition~\ref{prop:reduction}, the coefficient equations of
\eqref{eq:standard} reduce to five quadratic equations in five
unknowns with rational coefficients --- written out in full, with
their derivation, in Appendix~\ref{app:system}. The archived
computation \cite{Artifacts} certifies, in exact rational arithmetic
with no floating point:
\begin{itemize}
\item the system is zero-dimensional, and its eliminant with respect
to one coordinate is a squarefree integer polynomial of degree
fourteen (matching the B\'ezout count for a proper linear section of
the Pl\"ucker-embedded $\mathrm{Gr}(2,6)$, whose degree is
$C_4=14$);
\item the Sturm sequence of the eliminant has seven sign variations
at $-\infty$ and seven at $+\infty$: the eliminant has \emph{no real
roots}. A real solution of the system would make its eliminated
coordinate a real root; hence no real solution exists, no pair
$(U,V,\gamma)$ as in Proposition~\ref{prop:reduction} exists, and
Theorem~\ref{thm:main} follows.
\end{itemize}
Protocol: the Gr\"obner-basis computation was performed with
\texttt{msolve} \cite{msolve} over $\mathbb Q$; the real-root count is
independently re-derived from the archived eliminant by a
Sturm-sequence script in integer arithmetic; the polynomial system was
generated by two independently written emitters and compared as
canonically normalized sets before solving; as a sanity check, the
$19\times36$ real matrix of the lifted measurements
$\bar a_ja_j^{\top}$ has rank $19$ over $\mathbb Q$; and the same
pipeline, run on configurations engineered to fail, reports the
expected nonzero real-solution counts. The \texttt{msolve} input, the
certificate data (eliminant, Sturm sequence, sign counts, SHA-256
hashes), and a standalone exact verifier are archived at
\cite{Artifacts} (directory \texttt{certificate/}); the verification
reruns in minutes on a laptop.

\subsection*{Acknowledgment}
The author received assistance from an AI assistant in the search for
candidate frames and in the preparation of this manuscript.

\appendix

\section{The certified system}\label{app:system}

This appendix specifies the certified system exactly as archived
(\texttt{certificate/certificate.json}); the verifier
(\texttt{certificate/verify\_exact.py}) rebuilds it from the
description below and rechecks every claim.

Write $U=\sum_{i=0}^{5}u_it^i$, $V=\sum_{i=0}^{5}v_it^i$ and
$q_{ij}=u_jv_i-u_iv_j$. Since
$(\zeta^i\bar\zeta^{\,j}-\bar\zeta^{\,i}\zeta^j)/2i
=\sin\bigl((i-j)\theta\bigr)$ with $\theta=\arg\zeta$, the
coefficients of $\mathcal W$ are
\[
[t^m]\,\mathcal W(U,V)=\sum_{i<j,\ i+j=m}s_{j-i}\,q_{ij},
\qquad
s_k:=\sin k\theta=\operatorname{Im}\zeta^k,
\]
and exactly
\[
s_1=\tfrac{24}{25},\quad
s_2=-\tfrac{336}{625},\quad
s_3=-\tfrac{10296}{15625},\quad
s_4=\tfrac{354144}{390625},\quad
s_5=\tfrac{1476984}{9765625}.
\]
Let
\[
g(t)=\prod_{s}(t-s)
=t^8+7t^7-16t^6-182t^5-91t^4+1183t^3+1546t^2-1008t-1440,
\]
the product over the eight listed $s$, with coefficients
$g_0,\dots,g_8$ in ascending order.

\emph{Chart.} A witness $(U,V,\gamma)$ of
Proposition~\ref{prop:reduction} has $q_{01}\neq0$, and replacing
$(U,V)$ by a basis of the same plane with change-of-basis determinant
$q_{01}^{-1}$ scales every $q_{ij}$, hence $\mathcal W$ and $\gamma$,
by $q_{01}^{-1}$; so we may take $q_{01}=1$. Equation
\eqref{eq:standard} then says
\[
\sum_{i<j,\ i+j=m}s_{j-i}\,q_{ij}\;=\;\gamma\,g_{m-1},
\qquad m=1,\dots,9 .
\]
The $m=1$ equation gives $\gamma=s_1/g_0$ (so $\gamma\neq0$ is
automatic). In this chart the three-term Pl\"ucker relation on the
indices $(0,1,i,j)$ gives $q_{ij}=q_{0i}q_{1j}-q_{0j}q_{1i}$ for
$2\le i<j\le5$, so the eight coordinates $q_{0k},q_{1k}$
($k=2,\dots,5$) determine the plane. The equations $m=2,3,4$ are
linear and solve to
\[
q_{02}=\frac{s_1g_1}{s_2g_0}=-\frac54,\qquad
q_{12}=\frac{g_2}{g_0}-\frac{s_3}{s_1}\,q_{03},\qquad
q_{13}=\frac{s_1g_3/g_0-s_4\,q_{04}}{s_2}.
\]
Five unknowns remain; following the archived naming, put
\[
\beta_4=q_{03},\quad \beta_5=q_{04},\quad \beta_6=q_{05},\quad
\alpha_5=-q_{14},\quad \alpha_6=-q_{15}
\]
(and $\beta_3=q_{02}$, $\alpha_3=-q_{12}$, $\alpha_4=-q_{13}$ for the
quantities already determined above). Substituting into the equations
$m=5,\dots,9$ and clearing denominators to primitive integer form
yields the archived system, verbatim:
{\small
\begin{align*}
E_1:&\ \ 617760000\,\alpha_5-617760000\,\beta_4^2+966250000\,\beta_4
-1897200000\,\beta_5\\
&\ \ +141790464\,\beta_6-1707265625=0,\\
E_2:&\ \ -225000\,\alpha_5-303552\,\alpha_6+123552\,\beta_4\beta_5
-193250\,\beta_5-40625=0,\\
E_3:&\ \ -225000000\,\alpha_5\beta_4-193050000\,\alpha_6
+106007616\,\beta_4\beta_6-379440000\,\beta_5^2\\
&\ \ -330078125\,\beta_5-165808500\,\beta_6-2500000=0,\\
E_4:&\ \ 360000\,\alpha_6\beta_4+607104\,\beta_5\beta_6
+528125\,\beta_6+3125=0,\\
E_5:&\ \ 1440\,\alpha_5\beta_6-1440\,\alpha_6\beta_5+1=0.
\end{align*}}%
A real witness of Proposition~\ref{prop:reduction} produces, through
the (real) normalization above, a real solution of
$E_1=\dots=E_5=0$.

\emph{The eliminant.} Let
$I\subset\mathbb Q[\beta_4,\beta_5,\alpha_5,\alpha_6,\beta_6]$ be the
ideal generated by $E_1,\dots,E_5$. The archived eliminant
$E(\beta_5)$ is the primitive squarefree integer polynomial of degree
$14$ generating $I\cap\mathbb Q[\beta_5]$, found via a grevlex
Gr\"obner basis followed by FGLM conversion to lexicographic order ---
but no property of that computation is needed beyond the certified
fact $E\in I$, which the verifier checks by exact reduction of $E$ to
zero against the archived basis. Membership alone yields the logic of
the certificate: at any solution of the system, real or complex, the
$\beta_5$-coordinate is a root of $E$; the Sturm sequence of $E$ has
seven sign variations at $-\infty$ and seven at $+\infty$, so $E$ has
no real roots; hence the system has no real solutions. (In
particular, neither zero-dimensionality nor completeness of the
elimination is used in this direction; both hold and are certified,
but only as cross-checks.)

\end{document}